\numberwithin{equation}{section}
\theoremstyle{theorem}
\newtheorem{theorem}{Theorem}
\theoremstyle{definition}
\theoremstyle{remark}
\numberwithin{theorem}{section}
\title{Bernoulli and Faulhaber}
\author[J.L. Cereceda]{Jos\'e Luis Cereceda}
\address{%
        Collado Villalba, 28400 -- Madrid, Spain}
\email{jl.cereceda@movistar.es}
\begin{document}

\begin{abstract}
In a recent work, Zielinski used Faulhaber's formula to explain why the odd Bernoulli numbers are equal to zero. Here, we assume that the odd Bernoulli numbers are equal to zero to explain Faulhaber's formula.
\end{abstract}

\maketitle

\section{Introduction}

For integers $n \geq 1$ and $m \geq 0$, denote $S_m = \sum_{i=1}^{n} i^m$. It is well-known that $S_m$ can be expressed in the
so-called Faulhaber form (see, e.g., \cite{beardon,cere1,edwards2,knuth,krishna,shirali})
\begin{align}
S_{2m} & = S_2 \big[ b_{m,0} + b_{m,1} S_1 + b_{m,2} S_{1}^2 + \cdots + b_{m,m-1} S_{1}^{m-1} \big], \label{f1} \\
S_{2m+1} & =  S_{1}^2 \big[ c_{m,0} + c_{m,1} S_1 + c_{m,2} S_{1}^2 + \cdots +  c_{m,m-1} S_{1}^{m-1} \big], \label{f2}
\end{align}
where $b_{m,j}$ and $c_{m,j}$ are non-zero rational coefficients for $j =0,1,\ldots,m-1$ and $m \geq 1$. In particular, $S_3 = S_{1}^2$.
We can write \eqref{f1} and \eqref{f2} more compactly as
\begin{align*}
S_{2m} & = S_2  F_{2m}(S_1),  \\
S_{2m+1} & =  S_{1}^2 F_{2m+1}(S_1),
\end{align*}
where both $F_{2m}(S_1)$ and $F_{2m+1}(S_1)$ are polynomials in $S_1$ of degree $m-1$.

Zielinski derived a version of Faulhaber's formula \eqref{f2} for $S_{2m+1}$ (see \cite[Equation (2.5)]{ziel}). Then, by comparing
the terms in $n$ appearing in \cite[Equation (2.5)]{ziel} and in the traditional Bernoulli polynomial formula
\begin{equation*}
S_{2m+1} = \frac{1}{2m+2} \sum_{j=0}^{2m+1} \binom{2m+2}{j} (-1)^{j} B_{j} n^{2m+2-j},
\end{equation*}
and observing that $B_3 = B_5 =0$, he was able to conclude that $B_{2m+1} =0$ for all $m \geq 1$, where $B_0, B_1, B_2, \ldots \,$ are
the Bernoulli numbers.

In this paper, we show that conversely, assuming $B_{2m+1}=0$ for all $m \geq 1$ leads to the Faulhaber formulas in \eqref{f1} and
\eqref{f2}. To this end, we will use the well-known relationship between the power sums $S_m$ and the Bernoulli polynomials $B_m(x)$, namely
\begin{equation}\label{ship}
S_m = \frac{1}{m+1} \big( B_{m+1}(n+1) - B_{m+1} \big), \quad m,n \geq 1.
\end{equation}
We will also use a theorem established in \cite[Theorem]{goehle}, which for convenience we reproduce as follows.

\begin{theorem}[Goehle and Kobayashi \cite{goehle}]\label{th:1}
Let $f^{(i)}(s)$ denote the $i$th derivative of f evaluated at $s$. If f is a polynomial with even degree $n >1$, then $f$ has a line of symmetry at $s$ if and only if $f^{(i)}(s) =0$ for all odd $i$. Similarly, if $f$ is a polynomial with odd degree $n >1$, then $f$ has a point of symmetry at $(s, f(s))$ if and only if $f^{(i)}(s) =0$ for all even $i \geq 2$.
\end{theorem}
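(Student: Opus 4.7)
The natural strategy is to convert each symmetry condition into a polynomial identity in an auxiliary variable $t$ and then compare coefficients, using the fact that for a polynomial $f$ of degree $n$ the Taylor expansion around $s$ is the finite sum
$$f(s \pm t) = \sum_{i=0}^{n} \frac{f^{(i)}(s)}{i!} (\pm t)^i.$$

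For the first equivalence, I would interpret ``line of symmetry at $s$'' as the functional equation $f(s+t) = f(s-t)$ for all $t$ and substitute the two Taylor expansions. Their difference collapses to
$$f(s+t) - f(s-t) = 2 \sum_{\substack{0 \leq i \leq n \\ i \text{ odd}}} \frac{f^{(i)}(s)}{i!}\, t^{i},$$
and requiring this polynomial in $t$ to vanish identically is equivalent, by comparison of coefficients, to $f^{(i)}(s) = 0$ for every odd $i$. The converse direction is immediate from the same identity, since if every odd derivative at $s$ vanishes then the Taylor expansion about $s$ is an even function of $t$.

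For the second equivalence, I would read ``point of symmetry at $(s, f(s))$'' as the functional equation $f(s+t) + f(s-t) = 2 f(s)$ for all $t$. Summing the two Taylor expansions gives
$$f(s+t) + f(s-t) = 2 \sum_{\substack{0 \leq i \leq n \\ i \text{ even}}} \frac{f^{(i)}(s)}{i!}\, t^{i};$$
after cancelling the $i=0$ contribution $2 f(s)$ against the right-hand side, coefficient comparison yields $f^{(i)}(s) = 0$ for every even $i \geq 2$, and the converse is again immediate.

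I do not foresee a substantive obstacle: once each symmetry is written as a functional equation, the argument reduces to reading off the parity of the surviving Taylor coefficients. The parity hypothesis on $n$ is not strictly used in the algebra; it simply records the geometric fact that a polynomial of odd (resp.\ even) degree $n > 1$ cannot possess a vertical line (resp.\ a nontrivial center) of symmetry, because the top derivative $f^{(n)}(s) = n!\, a_n$ is nonzero and would otherwise be forced to vanish.
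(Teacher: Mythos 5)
Your argument is correct and complete, but note that the paper itself offers no proof of this statement: it is imported verbatim from Goehle and Kobayashi \cite{goehle} precisely so that it can be used as a black box in Theorems \ref{th:2} and \ref{th:3}. So there is nothing in the paper to compare against; what you have written is essentially the standard proof (and, up to phrasing, the one in \cite{goehle}, which translates the graph so that $s$ goes to the origin and characterizes even/odd polynomials). Your reduction of ``line of symmetry at $s$'' to $f(s+t)=f(s-t)$ and of ``point of symmetry at $(s,f(s))$'' to $f(s+t)+f(s-t)=2f(s)$, followed by the finite Taylor expansion and coefficient comparison in $t$, settles both equivalences, and your closing remark is also accurate: the parity assumption on $\deg f$ is not needed for the biconditional itself, but only to make the symmetry attainable, since otherwise the nonvanishing top derivative $f^{(n)}(s)=n!\,a_n$ would be among the derivatives forced to vanish. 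This is exactly how the result is used later in the paper, where $B_{2m}(x)$ (even degree) gets a line of symmetry at $x=\tfrac{1}{2}$ and $B_{2m+1}(x)$ (odd degree) a point of symmetry at $(\tfrac{1}{2},0)$.
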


\section{Bernoulli Polynomials}

For $m \geq 0$, the classical Bernoulli polynomial $B_m(x)$ in the real variable $x$ is defined by (see, e.g., \cite{apostol})
\begin{equation*}
B_m(x) = \sum_{j=0}^{m} \binom{m}{j} B_j x^{m-j},
\end{equation*}
where $B_0, B_1, B_2, \ldots \,$ are the Bernoulli numbers, and $B_m(0) = B_m$. We write down the following couple of basic
properties of the Bernoulli polynomials that we will use when proving Theorems \ref{th:2} and \ref{th:3} \cite{apostol}:
\begin{align}
B_m \left( \frac{1}{2} \right) & = \big( 2^{1-m} - 1 \big) B_m,  \label{p1}
\intertext{and}
B_{m}^{(k)}(x) & = k! \binom{m}{k} B_{m-k}(x).  \label{p2}
\end{align}
Note that relation \eqref{p1} holds irrespective of whether $m$ is even or odd. In what follows, by $B_{\text{odd}}^{\prime}$
[$B_{\text{odd}}$], we mean every one of the elements of the finite set $\{ B_3, B_5, \ldots, B_{2m-1} \}$ [$\{ B_3, B_5, \ldots,
B_{2m+1} \}$], where $m$ is any arbitrary fixed integer $\geq 2$ [$\geq 1$]. Next, we establish the following theorem.

\begin{theorem}\label{th:2}
Let $U(x)$ denote the quadratic polynomial $U(x) = \frac{1}{2}x(x-1)$. Then, we have
\begin{align}
& B_{\text{odd}}^{\prime} = 0  \,\,  \Leftrightarrow \,\,  B_{2m}(x) = B_{2m} + \sum_{j=2}^{m} \hat{b}_{j}^{(2m)} U(x)^{j},
\quad  m \geq 2;  \label{th1} \\
& B_{\text{odd}} = 0  \,\,  \Leftrightarrow \,\,  B_{2m+1}(x) = \left( x - \frac{1}{2}\right) \sum_{j=1}^{m} \hat{b}_{j}^{(2m+1)}
U(x)^{j}, \quad m \geq 1,  \label{th2}
\end{align}
where $\hat{b}_{2}^{(2m)}, \ldots, \hat{b}_{m}^{(2m)}, \hat{b}_{1}^{(2m+1)}, \ldots, \hat{b}_{m}^{(2m+1)}$ are nonzero
rational coefficients.
\end{theorem}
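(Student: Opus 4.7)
The plan is to use Theorem~\ref{th:1} to translate the two Bernoulli-vanishing hypotheses into symmetry statements about $B_{2m}(x)$ and $B_{2m+1}(x)$, and then rewrite each symmetric/antisymmetric polynomial in the $U(x)$-basis; non-zeroness of the coefficients will then be handled by a simultaneous induction on $m$ using $B_{n}^{\prime}(x)=n\,B_{n-1}(x)$. Combining \eqref{p1} with \eqref{p2}, for any $n\ge 1$ and $0\le i\le n$,
\[
B_{n}^{(i)}\!\left(\tfrac{1}{2}\right)=i!\,\binom{n}{i}\,\bigl(2^{1-(n-i)}-1\bigr)\,B_{n-i}.
\]
For $n=2m$ and odd $i$, the index $n-i$ is odd; either $n-i\in\{3,5,\ldots,2m-1\}$ (so $B_{n-i}\in B_{\mathrm{odd}}^{\prime}$), or $i=2m-1$, in which case the prefactor $2^{1-(n-i)}-1=0$ kills the term. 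Hence every odd-order derivative of $B_{2m}$ vanishes at $1/2$ iff $B_{\mathrm{odd}}^{\prime}=0$, which by Theorem~\ref{th:1} is equivalent to $B_{2m}(x)$ being symmetric about $x=1/2$. The analogous computation with $n=2m+1$ and even $i\ge 2$ shows that the corresponding even derivatives vanish iff $B_{3}=\cdots=B_{2m-1}=0$; combined with $B_{2m+1}(1/2)=(2^{-2m}-1)\,B_{2m+1}$ (zero iff $B_{2m+1}=0$), this says $B_{\mathrm{odd}}=0$ iff $B_{2m+1}(x)$ is antisymmetric about $x=1/2$.

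Set $y=x-1/2$, so $U(x)=\tfrac{1}{2}(y^{2}-\tfrac{1}{4})$; then every polynomial symmetric in $y$ is a polynomial in $U$, and every antisymmetric one factors as $(x-1/2)$ times a polynomial in $U$. Writing $B_{2m}(x)=\sum_{j=0}^{m}a_{j}\,U(x)^{j}$, evaluation at $x=0$ (where $U=0$) gives $a_{0}=B_{2m}$, and differentiating then evaluating at $x=0$ (where $U^{\prime}(0)=-1/2$) gives $a_{1}=-2B_{2m}^{\prime}(0)=-4m\,B_{2m-1}=0$. Writing $B_{2m+1}(x)=(x-1/2)\sum_{j=0}^{m}\hat{b}_{j}^{(2m+1)}\,U(x)^{j}$, evaluation at $x=0$ gives $\hat{b}_{0}^{(2m+1)}=-2\,B_{2m+1}=0$. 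This produces the shapes claimed in \eqref{th1} and \eqref{th2}.

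I then prove non-zeroness by simultaneous induction on $m$, with bases $B_{3}(x)=2(x-1/2)\,U(x)$ and $B_{4}(x)=B_{4}+4\,U(x)^{2}$. The step ``\eqref{th2} for $m-1$ $\Rightarrow$ \eqref{th1} for $m$'' uses $B_{2m}^{\prime}(x)=2m\,B_{2m-1}(x)$ together with the identity $(x-1/2)\,U(x)^{j}=U^{\prime}(x)\,U(x)^{j}=\tfrac{1}{j+1}\bigl(U(x)^{j+1}\bigr)^{\prime}$ to integrate term by term, yielding $\hat{b}_{k}^{(2m)}=\tfrac{2m}{k}\,\hat{b}_{k-1}^{(2m-1)}\neq 0$. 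The step ``\eqref{th1} for $m$ $\Rightarrow$ \eqref{th2} for $m$'' comes from differentiating the ansatz, equating with $(2m+1)B_{2m}(x)$, and solving the resulting triangular recursion, anchored by $\hat{b}_{m}^{(2m+1)}=\hat{b}_{m}^{(2m)}$ at the top and by the constant-term match $\hat{b}_{1}^{(2m+1)}=4(2m+1)B_{2m}\neq 0$ at the bottom (non-vanishing of $B_{2m}$). Both converse directions are immediate: each $U(x)^{j}$ is symmetric about $x=1/2$ and $(x-1/2)$ is antisymmetric, so the right-hand sides of \eqref{th1}--\eqref{th2} have the stated symmetry, and Theorem~\ref{th:1} combined with the translation step forces the appropriate odd Bernoulli numbers to vanish.

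The delicate point is the non-zeroness of the intermediate $\hat{b}_{j}^{(2m+1)}$ for $2\le j\le m-1$: unlike the one-step cascade that carries non-zeroness through \eqref{th1}, the recursion for the odd-index coefficients couples two consecutive indices and a priori admits cancellations. Closing this step will probably require a strengthened inductive hypothesis that tracks a sign pattern of the coefficients, or an independent argument exploiting the known sign-alternation and non-vanishing of the $B_{2m}$.
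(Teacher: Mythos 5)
Your structural argument coincides with the paper's: you translate $B_{\text{odd}}^{\prime}=0$ and $B_{\text{odd}}=0$ into symmetry (resp.\ antisymmetry) of $B_{2m}(x)$ and $B_{2m+1}(x)$ about $x=\tfrac12$ via \eqref{p1}--\eqref{p2} and Theorem \ref{th:1}, rewrite in powers of $U(x)$, and pin down $\hat{b}_0$ and $\hat{b}_1$ exactly as the paper does, and your converse directions are the paper's (ii) and (iv). The genuine gap is the one you flag yourself: the non-vanishing of $\hat{b}_{j}^{(2m+1)}$ for $2\le j\le m-1$ is part of the statement of Theorem \ref{th:2}, and your argument for it is not completed. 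Differentiating your ansatz gives $\frac{d}{dx}\big[(x-\tfrac12)U^j\big]=(2j+1)U^j+\tfrac{j}{4}U^{j-1}$, hence the recursion $(2k+1)\hat{b}_{k}^{(2m+1)}+\tfrac{k+1}{4}\hat{b}_{k+1}^{(2m+1)}=(2m+1)\hat{b}_{k}^{(2m)}$; knowing only that $\hat{b}_{k}^{(2m)}\neq0$ and $\hat{b}_{k+1}^{(2m+1)}\neq0$ does not exclude cancellation, so ``will probably require a strengthened inductive hypothesis'' leaves the theorem unproved as written. The fix you hint at does work and should be written out: strengthen the induction to the sign pattern that $\hat{b}_{j}^{(2m+1)}$ has sign $(-1)^{m-j}$ (anchored at the top by $\hat{b}_{m}^{(2m+1)}=2^{m}>0$, read off from the leading coefficient of the monic $B_{2m+1}$). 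Your clean identity $\hat{b}_{k}^{(2m)}=\tfrac{2m}{k}\,\hat{b}_{k-1}^{(2m-1)}$ then transfers the same alternation to the even level, and in the recursion above the two terms $(2m+1)\hat{b}_{k}^{(2m)}$ and $-\tfrac{k+1}{4}\hat{b}_{k+1}^{(2m+1)}$ both carry the sign $(-1)^{m-k}$, so no cancellation is possible and $\hat{b}_{k}^{(2m+1)}\neq0$ (with the same sign) follows by descending induction on $k$; the bottom case $k=1$ is consistent with your $\hat{b}_{1}^{(2m+1)}=4(2m+1)B_{2m}$.

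For comparison, the paper establishes non-vanishing by a different mechanism: it evaluates successive derivatives of the $U$-expansion at $x=0$, using $B_{2m}^{\prime\prime}(0)=2m(2m-1)B_{2m-2}\neq0$ to get $\hat{b}_{2}^{(2m)}\neq0$, then the vanishing odd-order derivatives $B_{2m}^{(2j+1)}(0)\propto B_{2m-2j-1}=0$ to cascade upward, and treats the odd case ``similarly.'' Your route through $B_{n}^{\prime}(x)=nB_{n-1}(x)$ is genuinely different and, on the even side, cleaner (it gives the closed relation $\hat{b}_{k}^{(2m)}=\tfrac{2m}{k}\hat{b}_{k-1}^{(2m-1)}$ in one stroke), but as submitted only half of the non-zeroness claim is actually proved.
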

\begin{proof}
(i) $B_{\text{odd}}^{\prime} =0 \,\Rightarrow\,$ the right side of \eqref{th1}. If $B_{\text{odd}}^{\prime} =0$, from \eqref{p1}, it follows that $B_{2m-i}(\frac{1}{2}) = 0$ for $i =1,3,\ldots, 2m-1$ (we include $i =2m-1$ because $B_{1}(\frac{1}{2}) =0$). From \eqref{p2}, this in turn implies that $B_{2m}^{(i)}(\frac{1}{2}) =0$ for all odd $i$. (Needless to say, because $\deg B_{2m}(x) =2m$, $B_{2m}^{(s)}(x) =0$ for all $s > 2m$). Therefore, Theorem \ref{th:1} tells us that, for all $m \geq 1$, $B_{2m} (x)$ has a line of symmetry at $\frac{1} {2}$. We can then Taylor-expand $B_{2m}(x)$ about $x = \frac{1}{2}$ to get
\begin{equation*}
B_{2m}(x) = \sum_{j=0}^{m} \hat{v}_j^{(2m)} \left( x-\frac{1}{2} \right)^{2j}.
\end{equation*}
Because $(x - \frac{1}{2})^2 = \frac{1}{4} (1 + 8U(x))$, the last expression can be equivalently written as
\begin{equation}\label{th3}
B_{2m}(x) = \sum_{j=0}^{m} \hat{b}_{j}^{(2m)} U(x)^{j},
\end{equation}
for certain coefficients $\hat{b}_{0}^{(2m)}, \hat{b}_{1}^{(2m)}, \ldots, \hat{b}_{m}^{(2m)}$. Clearly, as $B_{2m}(0) = B_{2m}$, we have that $\hat{b}_{0}^{(2m)} = B_{2m}$. On the other hand, for $m \geq 2$, we must have that $B_{2m}^{\prime}(0) = 2m B_{2m-1}(0) = 2m B_{2m-1}=0$. Differentiating \eqref{th3} and evaluating at $x =0$ yields $B_{2m}^{\prime}(0) = -\frac{1}{2} \hat{b}_{1}^{(2m)}$, from which we deduce that $\hat{b}_{1}^{(2m)} =0$. Moreover, because $B_{2m}^{\prime\prime}(0) \neq 0$, from \eqref{th3}, it follows that $\hat{b}_{2}^{(2m)} \neq 0$. Furthermore, because $B_{2m}^{\prime\prime\prime}(0) =0$ and $\hat{b}_{2}^{(2m)} \neq 0$, from \eqref{th3}, it follows that $\hat{b}_{3}^{(2m)} \neq 0$. Continuing in this fashion, it can be shown that $\hat{b}_{2}^{(2m)}, \ldots, \hat{b}_{m}^{(2m)} \neq 0$. All of these coefficients are rational because the Bernoulli numbers are rational.

(ii) The right side of \eqref{th1} $\, \Rightarrow \, B_{\text{odd}}^{\prime} =0$. It is readily verified that $U(x)$ fulfills the symmetry property $U(x +\frac{1}{2}) = U(\frac{1}{2} - x)$. As a consequence, assuming that $B_{2m}(x) = B_{2m} + \sum_{j=2}^{m} \hat{b}_{j}^{(2m)} U(x)^{j}$, it follows that $B_{2m}(x)$ satisfies the same relation $B_{2m}(x + \frac{1}{2}) = B_{2m}(\frac{1}{2}-x)$, which means that $B_{2m}(x)$ has a line of symmetry at $s = \frac{1}{2}$. Therefore, according to Theorem \ref{th:1}, we must have that $B_{2m}^{(i)}(\frac{1}{2}) =0$ for all odd $i$. From \eqref{p2}, this in turn implies that $B_{2m-i}(\frac{1}{2}) =0$ for all odd $i$. Because $m$ is any arbitrary integer $\geq 2$, from \eqref{p1}, we conclude that $B_{\text{odd}}^{\prime} =0$.

(iii) $B_{\text{odd}} =0 \,\Rightarrow\,$ the right side of \eqref{th2}. If $B_{\text{odd}} =0$, from \eqref{p1}, it follows that $B_{2m+1-i}(\frac{1}{2}) = 0$ for $i =0,2,\ldots, 2m$. From \eqref{p2}, this in turn implies that $B_{2m+1}^{(i)}(\frac{1}{2}) =0$ for all even $i \geq 0$ (we include $i =0$ because $B_{2m+1}(\frac{1}{2})$ is proportional to $B_{2m+1} =0$). Therefore, invoking Theorem \ref{th:1}, we conclude that for all $m \geq 1$, $B_{2m+1}(x)$ has a point of symmetry at $(\frac{1}{2},0)$. We can then Taylor-expand $B_{2m+1}(x)$ about $x = \frac{1}{2}$ to get
\begin{equation*}
B_{2m+1}(x) = \sum_{j=0}^{m} \hat{v}_j^{(2m+1)} \left( x-\frac{1}{2} \right)^{2j+1} =  \left( x-\frac{1}{2} \right)
\sum_{j=0}^{m} \hat{v}_j^{(2m+1)} \left( x-\frac{1}{2} \right)^{2j}.
\end{equation*}
As before, because $(x - \frac{1}{2})^2 = \frac{1}{4} (1 + 8U(x))$, the last expression can be equivalently written as
\begin{equation}\label{th4}
B_{2m+1}(x) =  \left( x-\frac{1}{2} \right) \sum_{j=0}^{m} \hat{b}_{j}^{(2m+1)} U(x)^{j},
\end{equation}
for certain coefficients $\hat{b}_{0}^{(2m+1)}, \hat{b}_{1}^{(2m+1)}, \ldots, \hat{b}_{m}^{(2m+1)}$. Clearly, as $B_{2m+1}(0) =B_{2m+ 1}$, we have that, for $m \geq 1$, $\hat{b}_{0}^{(2m+1)} =0$. On the other hand, because $B_{2m+1}^{\prime}(0) = (2m+1)B_{2m}(0) = (2m +1)B_{2m} \neq 0$, from \eqref{th4}, it follows that $\hat{b}_{1}^{(2m+1)} \neq 0$. Similarly, using \eqref{p2}, it can be shown that the rational coefficients $\hat{b}_{1}^{(2m+1)}, \ldots, \hat{b}_{m}^{(2m+1)} \neq 0$.

(iv) The right side of \eqref{th2} $\, \Rightarrow \, B_{\text{odd}} =0$. Assume that $B_{2m+1}(x) = \left( x - \frac{1}{2}\right) \sum_{j=1}^{m} \hat{b}_{j}^{(2m+1)}U(x)^{j}$. Then, because $U(x +\frac{1}{2}) = U(\frac{1}{2} - x)$, it follows that $B_{2m+1}(x +\frac{1}{2}) = -B_{2m+1}(\frac{1}{2} -x)$. This means that $B_{2m+1}(x)$ has a point of symmetry at $(\frac{1}{2}, 0)$. According to Theorem \ref{th:1}, this implies that $B_{2m+1}^{(i)}(\frac{1}{2}) =0$ for all even $i \geq 0$ (we include the case $i =0$ because, from the right side of \eqref{th2}, we have that $B_{2m+1}(\frac{1}{2}) =0$). Therefore, taking into account \eqref{p2} and \eqref{p1}, and noting that $m$ is any arbitrary integer $\geq 1$, we conclude that $B_{\text{odd}} =0$.
\end{proof}

For completeness, we write down an explicit representation for the coefficients $\hat{b}_{j}^{(2m)}$ and $\hat{b}_{j}^{(2m+1)}$ \cite{cere2}:
\begin{align}
\hat{b}_{j}^{(2m)} & = 8^j \sum_{k=j}^{m} \frac{1}{4^k}\binom{2m}{2k}\binom{k}{j} B_{2m-2k}\left( \frac{1}{2} \right),
\label{cof1} \\
\hat{b}_{j}^{(2m+1)} & =  8^j \sum_{k=j}^{m} \frac{1}{4^k}\binom{2m+1}{2k+1}\binom{k}{j} B_{2m-2k}\left( \frac{1}{2}
\right),  \label{cof2}
\end{align}
where $j =0,1,\ldots,m$. It is to be noted that, as we have shown, $b_{0}^{(2m)} = B_{2m}$ (for $m \geq 0$), $b_{0}^{(2m+1)} =0$
(for $m \geq 1$), and $b_{1}^{(2m)} =0$ (for $m \geq 2$).

\section{Bernoulli Meets Faulhaber}

Next, we establish the following theorem, which highlights the close relationship between the property $B_{\text{odd}}=0$ and the Faulhaber formulas in \eqref{f1} and \eqref{f2}.

\begin{theorem}\label{th:3}
For $m \geq 1$, we have that
\begin{equation*}
B_{\text{odd}} =0 \,\, \Leftrightarrow \, \, \begin{cases}
S_{2m} = S_2 F_{2m}(S_1),  \\
S_{2m+1} = S_{1}^2 F_{2m+1}(S_1),
\end{cases}
\end{equation*}
where $F_{2m}(S_1)$ and $F_{2m+1}(S_1)$ are polynomials in $S_1$ of degree $m-1$.
\end{theorem}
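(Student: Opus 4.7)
The bridge between Theorem~\ref{th:2} and Theorem~\ref{th:3} consists of the elementary identities
\[
U(n+1) = \tfrac{1}{2}n(n+1) = S_1 \quad \text{and} \quad \bigl(x-\tfrac{1}{2}\bigr)U(x)\big|_{x=n+1} = (n+\tfrac{1}{2})S_1 = \tfrac{3}{2}S_2.
\]
Combined with \eqref{ship}, these allow me to translate directly between the representations of $B_{2m+1}(x)$ and $B_{2m+2}(x)$ furnished by Theorem~\ref{th:2} and Faulhaber forms for $S_{2m}$ and $S_{2m+1}$.

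For the forward direction, I assume $B_{\text{odd}}=0$ and substitute $x=n+1$ in \eqref{ship}. Using $B_{2m+1}=0$ and the representation \eqref{th2} for $B_{2m+1}(x)$, I obtain
\[
S_{2m} \,=\, \tfrac{1}{2m+1}(n+\tfrac{1}{2})S_1 \sum_{j=1}^{m} \hat{b}_j^{(2m+1)}\,S_1^{j-1} \,=\, S_2\, F_{2m}(S_1),
\]
with $F_{2m}$ of degree $m-1$ (its leading coefficient is a non-zero multiple of $\hat{b}_m^{(2m+1)}$). Replacing $m$ by $m+1$ in \eqref{th1} and plugging into \eqref{ship} similarly gives $S_{2m+1} = \tfrac{1}{2m+2}\sum_{j=2}^{m+1}\hat{b}_j^{(2m+2)}S_1^j = S_1^2\, F_{2m+1}(S_1)$, again of the required form and degree.

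For the backward direction, I assume both Faulhaber identities and fix $m\ge 1$. The identity $S_2=\tfrac{2}{3}(x-\tfrac{1}{2})U(x)\big|_{x=n+1}$ converts the hypothesis $S_{2m}=S_2 F_{2m}(S_1)$, via \eqref{ship}, into
\[
B_{2m+1}(n+1) \,=\, \tfrac{2(2m+1)}{3}\,(n+\tfrac{1}{2})\,U(n+1)\,F_{2m}(U(n+1)).
\]
Since both sides are polynomials in $n$ that agree on all positive integers, they coincide as polynomials. Setting $x=n+1$ and absorbing constants yields an expression of the exact shape appearing on the right-hand side of \eqref{th2}, namely $B_{2m+1}(x)=(x-\tfrac{1}{2})\sum_{j=1}^m \hat{b}_j^{(2m+1)}U(x)^j$. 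The ($\Leftarrow$) direction of Theorem~\ref{th:2} then forces $B_3=B_5=\cdots=B_{2m+1}=0$, and since $m$ was arbitrary, $B_{\text{odd}}=0$.

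The only delicate point is in the backward implication, where one must pass from an equality of functions of positive integer $n$ to a genuine polynomial identity in $x$ before invoking Theorem~\ref{th:2}; everything else reduces to bookkeeping with the two bridge identities above.
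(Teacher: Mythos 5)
Your forward direction is correct and coincides with the paper's: substituting the representations of $B_{2m+1}(x)$ and $B_{2m+2}(x)$ from Theorem~\ref{th:2} into \eqref{ship} and using $U(n+1)=S_1$, $(n+\tfrac{1}{2})S_1=\tfrac{3}{2}S_2$ is exactly how parts (i) and (iii) of the paper obtain \eqref{s2m} and \eqref{s2m1}. The backward direction, however, has a genuine gap. Equation \eqref{ship} gives $B_{2m+1}(n+1)-B_{2m+1}=(2m+1)\,S_2\,F_{2m}(S_1)$, not the identity you display: you have silently dropped the constant $B_{2m+1}$, which is precisely the quantity whose vanishing is to be proved. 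Keeping it, the polynomial identity reads $B_{2m+1}(x)=\tfrac{2(2m+1)}{3}\,(x-\tfrac{1}{2})\,U(x)\,F_{2m}(U(x))+B_{2m+1}$, and this is \emph{not} of the shape on the right-hand side of \eqref{th2} unless $B_{2m+1}=0$: the announced ``absorbing constants'' cannot be done, because the right-hand side of \eqref{th2} vanishes at $x=\tfrac{1}{2}$ while a non-zero constant does not, so a non-zero $B_{2m+1}$ cannot be folded into a factor of $(x-\tfrac{1}{2})$. As written, the appeal to the ($\Leftarrow$) direction of Theorem~\ref{th:2} therefore begs the question for the top odd Bernoulli number.

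The repair is short and is what the paper actually does in part (ii): keep the constant and evaluate the polynomial identity at the root of the $S_2$-factor. Viewing $S_1,S_2$ as polynomials in a real variable $x$, one has $S_2=\tfrac{1}{3}(2x+1)S_1$, so $S_2$ vanishes at $x=-\tfrac{1}{2}$; evaluating \eqref{th5} there gives $B_{2m+1}(\tfrac{1}{2})=B_{2m+1}$, and comparison with \eqref{p1}, i.e.\ $B_{2m+1}(\tfrac{1}{2})=(2^{-2m}-1)B_{2m+1}$, forces $B_{2m+1}=0$; since $m\geq 1$ is arbitrary, $B_{\text{odd}}=0$. (Equivalently, in your normalization: evaluate your corrected identity at $x=\tfrac{1}{2}$.) Note that this evaluation argument bypasses Theorem~\ref{th:2} entirely in the backward direction, whereas your route would still need it afterwards; also, strictly speaking the equivalence \eqref{th2} is stated with non-zero coefficients, so deducing it from an arbitrary $F_{2m}$ would require a further (unnecessary) check. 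Your other points are fine: passing from agreement at all positive integers $n$ to a polynomial identity is legitimate, and using only the even-index Faulhaber form for the backward implication suffices for the stated biconditional (the paper's part (iv), for the odd-index form, is extra information cited from Zielinski).
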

\begin{proof}
(i) $B_{\text{odd}} =0 \, \Rightarrow \, S_{2m} = S_2 F_{2m}(S_1)$. According to Theorem \ref{th:2}, if $B_{\text{odd}} =0$, then
$B_{2m+1}(x)$ can be expressed as $B_{2m+1}(x) = (x - \frac{1}{2}) \sum_{j=1}^{m} \hat{b}_{j}^{(2m+1)} U(x)^{j}$. Therefore,
from \eqref{ship}, it follows that
\begin{equation*}
S_{2m} = \frac{1}{2m+1} \left(n + \frac{1}{2} \right) \sum_{j=1}^{m} \hat{b}_{j}^{(2m+1)} S_{1}^{j},
\end{equation*}
because we are assuming that $B_{2m+1} =0$. It is immediate to see that the last equation can be written as
\begin{equation}\label{s2m}
S_{2m} = \frac{3}{4m+2} S_2 \big[ \hat{b}_{1}^{(2m+1)}+ \hat{b}_{2}^{(2m+1)} S_1 + \cdots + \hat{b}_{m}^{(2m+1)}
S_{1}^{m-1} \big],
\end{equation}
which is obviously of the form \eqref{f1}.

(ii) $S_{2m} = S_2 F_{2m}(S_1) \, \Rightarrow \, B_{\text{odd}} =0$. If $S_{2m} = S_2 F_{2m}(S_1)$, from \eqref{ship}, we obtain
\begin{equation}\label{th5}
(2m+1) S_2 F_{2m}(S_1) = B_{2m+1}(n+1) - B_{2m+1}.
\end{equation}
Considering both $S_1$ and $S_2$ as polynomials in the real variable $x$, we have that $S_2 = \frac{1}{3}(2x+1) S_1$, and then $S_2(-\frac{1}{2}) =0$. In view of \eqref{th5}, this implies that $B_{2m+1}(\frac{1}{2}) = B_{2m+1}$. On the other hand, from \eqref{p1}, we have that $B_{2m+1}(\frac{1}{2}) = (2^{-2m} - 1) B_{2m+1}$, from which we deduce that $B_{2m+1} =0$. Because $m$ is any arbitrary integer $\geq 1$, we conclude that $B_{\text{odd}} =0$.

(iii) $B_{\text{odd}} =0 \, \Rightarrow \, S_{2m+1} = S_{1}^2 F_{2m+1}(S_1)$. If $B_{\text{odd}} =0$, then, from Theorem \ref{th:2}, it follows that $B_{2m+2}(x)$ can be expressed as $B_{2m+2}(x) = B_{2m+2} + \sum_{j=2}^{m+1} \hat{b}_{j}^{(2m+2)} U(x)^{j}$ (note that, because we are using $B_{2m+2}(x)$, we have to assume $B_{\text{odd}} =0$ instead of $B_{\text{odd}}^{\prime}=0$ for Theorem \ref{th:2} to apply to this situation). Hence, from \eqref{ship}, we get
\begin{equation}\label{s2m1}
S_{2m+1} = \frac{1}{2m+2} \sum_{j=2}^{m+1} \hat{b}_{j}^{(2m+2)} S_{1}^{j} = \frac{S_{1}^{2}}{2m+2}
\big[ \hat{b}_{2}^{(2m+2)}+ \hat{b}_{3}^{(2m+2)} S_1 + \cdots + \hat{b}_{m+1}^{(2m+2)} S_{1}^{m-1} \big],
\end{equation}
which is obviously of the form \eqref{f2}.

(iv) $S_{2m+1} = S_{1}^2 F_{2m+1}(S_1) \, \Rightarrow \, B_{\text{odd}} =0$. A proof of this statement was given in \cite{ziel}. An alternative proof is as follows: if $S_{2m+1} = S_{1}^2 F_{2m+1}(S_1)$, from \eqref{ship}, we obtain
\begin{equation*}
B_{2m+2}(n+1) = B_{2m+2} + (2m+2) S_1^2 F_{2m+1}(S_1).
\end{equation*}
Considering $S_1 = \frac{1}{2}x(x+1)$ as a polynomial in the real variable $x$, we then have
\begin{equation*}
B_{2m+2}(x) = B_{2m+2} + (2m+2) (U(x))^2 F_{2m+1}(U(x)).
\end{equation*}
Because $U(x +\frac{1}{2}) = U(\frac{1}{2} - x)$, it turns out that $B_{2m+2}(x)$ equally fulfills $B_{2m+2}(x+\frac{1}{2}) = B_{2m+2}(\frac{1}{2} -x)$, and thus, $B_{2m+2}(x)$ has a line of symmetry at $s = \frac{1}{2}$. According to Theorem \ref{th:1}, this implies that $B_{2m+2}^{(i)}(\frac{1}{2}) =0$ for all odd $i$. From \eqref{p2}, this means that $B_{2m+2-i}(\frac{1}{2}) =0$ for all odd $i$. Because $m$ is any arbitrary integer $\geq 1$, from \eqref{p1}, we conclude that $B_{\text{odd}} =0$.
\end{proof}

From \eqref{f1} and \eqref{s2m}, it readily follows that, for $j=0,1,\ldots,m-1$, $b_{m,j} = \frac{3}{4m+2}\hat{b}_{j+1}^{(2m+1)}$. Therefore, from \eqref{cof2}, we obtain
\begin{equation}\label{cofb}
b_{m,j} = \frac{3 \cdot 8^{j+1}}{4m+2} \sum_{k =j+1}^{m} \frac{1}{4^k} \binom{2m+1}{2k+1}\binom{k}{j+1}
B_{2m-2k}\left( \frac{1}{2} \right).
\end{equation}
Similarly, from \eqref{f2} and \eqref{s2m1}, we have that, for $j=0,1,\ldots,m-1$, $c_{m,j} = \frac{1}{2m+2}\hat{b}_{j+2}^{(2m+2)}$. Hence, using \eqref{cof1}, and after a simple rearrangement, we find that
\begin{equation}\label{cofc}
c_{m,j} = \frac{8^{j+1}}{j+2} \sum_{k =j+1}^{m} \frac{1}{4^k} \binom{2m+1}{2k+1}\binom{k}{j+1}
B_{2m-2k}\left( \frac{1}{2} \right).
\end{equation}
Moreover, in view of \eqref{cofb} and \eqref{cofc}, there is a relation between the coefficients $b_{m,j}$ and $c_{m,j}$, namely,
\begin{equation*}
c_{m,j} = \frac{4m+2}{3j+6} b_{m,j}, \quad  j =0,1,\ldots, m-1.
\end{equation*}
Thus, knowing the coefficients $b_{m,j}$ in \eqref{f1} allows us to know the coefficients $c_{m,j}$ in \eqref{f2}, and vice versa. For example, because $S_{2m}^{\prime}(0) = B_{2m}$, from \eqref{f1}, it is seen that $b_{m,0} = 6B_{2m}$, and then the last equation tells us that $c_{m,0} = (4m+2)B_{2m}$.

\section{Conclusion}

In \cite{ziel}, Zielinski wonders why the odd Bernoulli numbers are equal to zero, and answers by saying that it is because
$S_{2m+1}$ is a polynomial in $S_{1}^{2}$, and $S_{1}^{2} = \frac{1}{4}(n^4 + 2n^3 +n^2)$. In this paper, we have shown that
$S_{2m}$ and $S_{2m+1}$ admit the polynomial representation in \eqref{f1} and \eqref{f2}, respectively, just because the odd Bernoulli numbers are equal to zero.

\vspace{2mm}

\end{document}